\documentclass[12pt,reqno]{amsart}

\usepackage{amsmath}
\usepackage{amsfonts}
\usepackage{amssymb}
\usepackage{amsthm}
\usepackage{mathrsfs}

\usepackage{parskip}

\usepackage{enumerate}
\usepackage{geometry}
\geometry{margin=.8in}

\usepackage{hyperref}

\newcommand{\RR}{\mathbf{R}}

\newcommand{\ZZ}{\mathbf{Z}}




\DeclareMathOperator{\ind}{ind}



\DeclareMathOperator{\area}{area}

\DeclareMathOperator{\genus}{genus}




\theoremstyle{plain} \newtheorem{defi}{Definition}
\theoremstyle{plain} 
\theoremstyle{plain} \newtheorem{theo}[defi]{Theorem}
\theoremstyle{plain} 
\theoremstyle{plain} 
\theoremstyle{plain} \newtheorem{lemm}[defi]{Lemma}
\theoremstyle{plain} 
\theoremstyle{plain} 
\theoremstyle{plain} \newtheorem*{theo*}{Theorem}
\theoremstyle{plain} 
\theoremstyle{plain} 
\theoremstyle{definition} \newtheorem*{clai}{Claim}

\numberwithin{defi}{section} 
\numberwithin{equation}{section} 

\allowdisplaybreaks

\title{Minimal hypersurfaces with arbitrarily large area}
\author{Otis Chodosh}
\address{Department of Mathematics\\Princeton
University\\Princeton, NJ 08544}
\address{School of Mathematics\\Institute for Advanced Study\\Princeton, NJ 08540}
\email{ochodosh@math.princeton.edu}
\author{Christos Mantoulidis}
\address{Department of Mathematics\\ Massachusetts Institute of Technology \\Cambridge, MA 02139}
\email{c.mantoulidis@mit.edu}
\date{\today}
\begin{document}

\begin{abstract}
For $3\leq n\leq 7$, we prove that a bumpy closed Riemannian $n$-manifold contains a sequence of connected embedded closed minimal surfaces with unbounded area. 
\end{abstract}

\maketitle

\section{Introduction}

The goal of this note is to prove the following result:

\begin{theo}\label{theo:main}
For $3\leq n \leq 7$, suppose that $(M^{n},g)$ is a connected, closed Riemannian $n$-manifold with a bumpy metric. Then, there is a sequence $\Sigma_{j} \subset (M^{n},g)$ of connected, embedded, closed minimal surfaces with $\area(\Sigma_{j})\to\infty$. 
\end{theo}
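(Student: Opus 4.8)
\section*{Proof proposal}

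The plan is to argue by contradiction. Suppose there is a constant $A_0$ such that $\area(\Sigma)\le A_0$ for every connected, closed, embedded minimal hypersurface $\Sigma$ in $(M,g)$. I would combine three ingredients: the Weyl law for the volume spectrum of Liokumovich--Marques--Neves, giving that the $p$-widths satisfy $\omega_p(M,g)\to\infty$; the multiplicity-one theorem for min--max minimal hypersurfaces (due to Zhou, valid for bumpy metrics when $3\le n\le 7$) together with the Marques--Neves index bound, so that each $\omega_p$ is attained by a smooth, embedded, min--max minimal hypersurface $\Sigma_p$ with all multiplicities equal to $1$ and $\ind(\Sigma_p)\le p$; and Sharp's compactness theorem, which --- because $g$ is bumpy --- implies that for each $I$ there are only finitely many connected closed embedded minimal hypersurfaces with area $\le A_0$ and index $\le I$.

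First I would extract the structural consequence. Writing $\Sigma_p=\bigsqcup_{i=1}^{N_p}\Sigma_{p,i}$ as a union of its connected components, multiplicity one gives $\omega_p=\sum_{i=1}^{N_p}\area(\Sigma_{p,i})$, while $\sum_i\ind(\Sigma_{p,i})=\ind(\Sigma_p)\le p$. Under the contradiction hypothesis every $\area(\Sigma_{p,i})\le A_0$, so $N_p\ge\omega_p/A_0\to\infty$. In particular the $\Sigma_{p,i}$ cannot all lie in a finite set, so $(M,g)$ admits infinitely many connected closed embedded minimal hypersurfaces, all of area $\le A_0$; by Sharp's theorem their Morse indices must then diverge. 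Moreover, for each $N$, choosing $p$ with $\omega_p\ge NA_0$ produces at least $N$ pairwise disjoint such hypersurfaces inside a single $\Sigma_p$.

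The crux --- and what I expect to be the main obstacle --- is to derive a contradiction from this abundance of low-area minimal hypersurfaces using only bumpiness. For this I would run a compactness argument: since the space of integral varifolds of mass $\le A_0$ in $M$ is compact, among a sufficiently large family of pairwise disjoint connected minimal hypersurfaces of area $\le A_0$ one finds two distinct members $T,T'$ that are arbitrarily close as varifolds. Along such pairs one passes to a limiting stationary integral varifold $V=\sum_j m_j\Gamma_j$, which is regular since $3\le n\le 7$. Away from the set where the second fundamental form concentrates, the convergence of $T$ and of $T'$ is smooth and locally graphical over the sheets $\Gamma_j$, and because $T$ and $T'$ are disjoint the difference of the corresponding graphs has a sign; normalizing this difference and applying the Harnack inequality together with elliptic estimates, it converges to a nonzero Jacobi field on some $\Gamma_j$ --- contradicting the nondegeneracy forced by bumpiness. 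The delicate point is to control the concentration set precisely enough --- using the index bound from Marques--Neves together with the sheeting and curvature-estimate machinery underlying Sharp's compactness --- so that the limiting Jacobi field extends across it and is genuinely defined on the whole closed hypersurface $\Gamma_j$. Once this contradiction is in hand, the theorem follows.
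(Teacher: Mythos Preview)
Your structural setup---using multiplicity one and the growth of $\omega_p$ to produce, for each large $p$, at least $\omega_p/A_0\to\infty$ pairwise disjoint connected embedded minimal hypersurfaces of area $\le A_0$---is correct and matches the paper's starting point. The gap is in the compactness step. The components $\Sigma_{p,i}$ have total index equal to $p$, while by Sharp's theorem (combined with bumpiness) there are only finitely many connected minimal hypersurfaces with area $\le A_0$ and index $\le I$ for each fixed $I$; hence any sequence of distinct components you extract must have index tending to infinity. Without a uniform index bound, your limiting varifold $V$ need not have smooth support: the regularity you invoke for $3\le n\le 7$ holds for area minimizers or for limits with bounded index (via Schoen--Simon and Sharp), not for arbitrary stationary integral varifolds. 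Even where $\supp V$ happens to be smooth, the convergence of $T$ and $T'$ to it is not locally graphical without curvature control, so the normalized-difference construction never gets started. The ``delicate point'' you flag is not a technicality to be cleaned up but the whole obstruction: the concentration set is governed by the index, which here is unbounded, and no removable-singularity argument is available.

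The paper sidesteps this entirely by separating the stable and unstable cases. First (Lemma~\ref{lemm:stab-finitely-many}), in a bumpy metric there are only finitely many \emph{stable} embedded minimal hypersurfaces of area $\le A_0$; here curvature estimates are automatic, and exactly the Jacobi-field argument you sketch goes through cleanly. Second (Lemma~\ref{lemm:stab-count-unstab-count}), if $(M,g)$ contains $Q\ge 2$ pairwise disjoint \emph{unstable} minimal hypersurfaces, then minimizing area in homology inside their complement produces at least $Q/2$ distinct stable ones in the interior. Combining the two lemmas gives a $p$-independent bound on the number of components of $\Sigma_p$, so some component has area at least $Cp^{1/n}$. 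The key idea you are missing is this reduction to the stable case via area minimization in the complement; it is what replaces the unavailable index bound.
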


We recall here the following standard definition:
\begin{defi} \label{def:bumpy.metric}
	We say that a metric $g$ on a Riemannian manifold $M^{n}$ is \emph{bumpy} if there is no immersed closed minimal hypersurface $\Sigma^{n-1}$ with a non-trivial Jacobi field. 
\end{defi}

By work of White \cite{White:bumpy.old,White:bumpy.new}, bumpy metrics are generic in the sense of Baire category. Here, ``generic'' will always mean in the Baire category sense. 

The key quantifier in Theorem \ref{theo:main} is \emph{connected}. Indeed, thanks to the resolution of Marques--Neves's multiplicity-one conjecture by the authors \cite{ChodoshMantoulidis} for $n=3$ and recently by Zhou \cite{Zhou:mult-one} for $3\leq n\leq 7$, if one does not require that the $\Sigma_{j}$ are connected, then Theorem \ref{theo:main} would be an immediate consequence of the following statement concerning the existence of minimal surfaces $\Sigma_{p}$ realizing the $p$-widths (note that if $(M^{n},g)$ does not satisfy the Frankel property, then it is not clear that the number of connected components of $\Sigma_{p}$ is uniformly bounded): 

\begin{theo}[$n=3$ \cite{ChodoshMantoulidis}, $3\leq n\leq 7$ \cite{Zhou:mult-one,MarquesNeves:uper-semi-index}]\label{theo:exist-p-widths}
For $3\leq n\leq 7$, suppose that $(M^{n},g)$ is a closed Riemannian $n$-manifold with a bumpy metric. Then, there is a constant $C=C(M,g)>0$ so that for each positive integer $p$, there is a smooth embedded closed minimal surface $\Sigma_{p}$ so that
	\begin{itemize}
		\item each component of $\Sigma_{p}$ is two-sided,
		\item the area of $\Sigma_{p}$ satisfies $C^{-1} p^{\frac 1 n}\leq \area_{g}(\Sigma_{p}) \leq C p^{\frac 1 n}$, and
		\item the index of $\Sigma_{p}$ is satisfies $\ind(\Sigma_{p}) = p$.
	\end{itemize}
\end{theo}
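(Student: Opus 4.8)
The plan is to deduce the statement by assembling the Almgren--Pitts min-max construction of minimal hypersurfaces realizing the volume spectrum $\{\omega_p=\omega_p(M,g)\}_{p\ge 1}$ with three external inputs: (i) the Weyl law for the volume spectrum, (ii) the multiplicity-one theorem, and (iii) the Morse index estimates for min-max minimal hypersurfaces.

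First I would fix a positive integer $p$ and run Almgren--Pitts min-max --- in the form developed by Pitts and put in the present ($p$-width) form by Gromov, Guth, and Marques--Neves --- to obtain a stationary integral varifold $V_p$ with $\|V_p\|(M)=\omega_p$ whose support is a closed minimal hypersurface. Since $3\le n\le 7$, the interior regularity theory for Almgren--Pitts min-max together with the Schoen--Simon curvature estimates shows this support is smooth and embedded, so $V_p=\sum_i m_i\,\Sigma_p^{(i)}$ with each $\Sigma_p^{(i)}$ a smooth closed embedded minimal hypersurface, $m_i\in\ZZ_{\ge 1}$, and $\sum_i m_i\,\area_g(\Sigma_p^{(i)})=\omega_p$. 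Next I would invoke the multiplicity-one theorem: since $g$ is bumpy (hence every closed minimal hypersurface is nondegenerate), Zhou's theorem \cite{Zhou:mult-one} for $3\le n\le 7$ --- and, for $n=3$, the Allen--Cahn min-max construction of \cite{ChodoshMantoulidis}, where multiplicity one follows from curvature estimates and nondegeneracy and the Allen--Cahn widths are identified with the $\omega_p$ --- lets us choose $V_p$ so that every component $\Sigma_p^{(i)}$ is two-sided and $m_i=1$. Writing $\Sigma_p:=\bigcup_i\Sigma_p^{(i)}$ (which need not be connected), we obtain a smooth closed embedded minimal hypersurface, each component two-sided, with $\area_g(\Sigma_p)=\omega_p$.

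For the area bounds I would quote the Liokumovich--Marques--Neves Weyl law, $\omega_p\,p^{-1/n}\to a(n)\,\vol(M,g)^{\frac{n-1}{n}}$ as $p\to\infty$ for a dimensional constant $a(n)>0$. Since the limit is positive and each $\omega_p>0$ (e.g.\ $\omega_p\ge\omega_1>0$ because $M$ is closed), the sequence $\omega_p\,p^{-1/n}$ is bounded above and bounded away from $0$ below; after enlarging the resulting constant to absorb finitely many small $p$, we get $C=C(M,g)>0$ with $C^{-1}p^{1/n}\le\area_g(\Sigma_p)=\omega_p\le Cp^{1/n}$ for all $p\ge 1$. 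For the index I would combine two estimates: the upper bound $\ind(\Sigma_p)\le p$, which is the Marques--Neves Morse index estimate for Almgren--Pitts min-max minimal hypersurfaces (valid since $g$ is bumpy); and the lower bound $\ind(\Sigma_p)\ge p$, which follows from the lower index estimate for multiplicity-one min-max hypersurfaces --- available precisely because of step (ii) --- giving $\ind(\Sigma_p)+\nul(\Sigma_p)\ge p$, together with $\nul(\Sigma_p)=0$ by bumpiness (cf.\ \cite{MarquesNeves:uper-semi-index}, and \cite{ChodoshMantoulidis} for $n=3$). Hence $\ind(\Sigma_p)=p$.

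The real difficulty is entirely in input (ii): absent the multiplicity-one theorem, Almgren--Pitts only yields a minimal hypersurface with integer multiplicities realizing $\omega_p$, and then neither $\area_g(\Sigma_p)=\omega_p$ nor the lower index bound need hold; moreover --- and this is exactly the obstruction the rest of this paper must work around --- there is no control on the number of components of $\Sigma_p$. Granting (ii), the remaining steps are essentially bookkeeping on cited results, the only minor care being the treatment of small $p$ in the Weyl-law step and the observation that the lower index bound genuinely requires multiplicity one.
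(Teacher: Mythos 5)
This theorem is quoted in the paper without proof, attributed to \cite{ChodoshMantoulidis}, \cite{Zhou:mult-one}, and \cite{MarquesNeves:uper-semi-index}, and your proposal is precisely the standard assembly of those cited inputs: Almgren--Pitts existence and regularity for the volume spectrum, Zhou's multiplicity-one theorem (Allen--Cahn for $n=3$) to get two-sidedness and $\area_g(\Sigma_p)=\omega_p$, the Liokumovich--Marques--Neves / Gaspar--Guaraco Weyl law for the two-sided area bounds, and the Marques--Neves index upper bound combined with the multiplicity-one lower bound and bumpiness ($\nul=0$) to force $\ind(\Sigma_p)=p$. This is correct and is exactly the derivation the paper intends the reader to extract from the references; the only cosmetic caveat is that for $n=3$ one needs only comparability of the Allen--Cahn and Almgren--Pitts widths (via the Weyl law), not their literal identification.
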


We emphasize that the $p$-widths were introduced by Gromov, Guth, and Marques--Neves \cite{Gromov:waist,Guth:minimax,MarquesNeves:posRic} in the Almgren--Pitts setting (as considered in the work of Zhou \cite{Zhou:mult-one}) and were understood in the Allen--Cahn setting (as considered in the work of the authors \cite{ChodoshMantoulidis}) by Gaspar--Guaraco \cite{Guaraco,GasparGuaraco}. 

Recently, there has been exciting progress on the existence and behavior of min-max minimal hypersurfaces in Riemannian manifolds; besides those works already mentioned, we point out that Liokumovich--Marques--Neves \cite{LMN:Weyl} have proven a Weyl law for the $p$-widths. This a key component of the proof by Irie--Marques--Neves that minimal surfaces are generically dense \cite{IrieMarquesNeves}. These results were extended to the Allen--Cahn setting by Gaspar--Guaraco \cite{GasparGuaraco:weyl}. Marques--Neves--Song have proven that generically there is an equidistributed set of minimal hypersurfaces \cite{MarquesNevesSong}. Song has proven Yau's conjecture on infinitely many minimal surfaces in all cases \cite{Song:Yau}. Finally, we note that Zhou--Zhu's min-max theory for prescribed mean curvature \cite{ZhouZhu:prescribed} was instrumental in the proof of Zhou's multiplicity-one result.

Theorem \ref{theo:main} is a consequence of the following statement:
\begin{theo} \label{theo:area-unbounded}
	For $3\leq n\leq 7$, suppose that $(M^{n},g)$ is a connected Riemannian $n$-manifold with a bumpy metric. Then, either:
	\begin{enumerate}
		\item  there exists a sequence of connected closed embedded stable minimal hypersurfaces $\Sigma_{j}\subset (M^{n},g)$ with $\area_{g}(\Sigma_{j})\to\infty$, or 
		\item the hypersurfaces $\Sigma_{p}$ from Theorem \ref{theo:exist-p-widths} have at least one connected component $\Sigma_{p}'$ with $\area_{g}(\Sigma_{p}') \geq C p^{\frac 1n}$ for some $C=C(M,g)>0$ independent of $p$.
	\end{enumerate}
\end{theo}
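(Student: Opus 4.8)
The plan is to run a dichotomy on the number of connected components of the min-max hypersurfaces $\Sigma_p$ supplied by Theorem \ref{theo:exist-p-widths}. Let $N_p$ denote the number of connected components of $\Sigma_p$. If $\sup_p N_p =: N_0 < \infty$, then for every $p$ the largest component $\Sigma_p'$ of $\Sigma_p$ satisfies $\area_g(\Sigma_p') \ge \area_g(\Sigma_p)/N_0 \ge p^{1/n}/(N_0 C)$, which is conclusion (2) of Theorem \ref{theo:area-unbounded} with constant $(N_0 C)^{-1}$. So it suffices to prove that if $\sup_p N_p = \infty$ then conclusion (1) holds. Note that this case forces $(M,g)$ to violate the Frankel property, since then some $\Sigma_p$ has at least two components, which are disjoint closed embedded minimal hypersurfaces.

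Assume then that $\sup_p N_p = \infty$, and suppose toward a contradiction that (1) fails. Then there is $A_0 = A_0(M,g)$ bounding the area of every connected closed embedded \emph{stable} minimal hypersurface in $(M,g)$. Since $3 \le n \le 7$, the Schoen--Simon curvature estimates give a uniform bound on the second fundamental form of all such hypersurfaces, so they form a compact family in the smooth topology; as $g$ is bumpy, each of them is nondegenerate, hence isolated, and therefore there are only \emph{finitely many}, say $S_1, \dots, S_m$. Every stable component of any $\Sigma_p$ thus belongs to $\{S_1, \dots, S_m\}$, contributing at most $mA_0$ to $\area_g(\Sigma_p)$; the remaining, unstable, components of $\Sigma_p$ number at most $\ind(\Sigma_p) = p$ and have total area at least $\area_g(\Sigma_p) - mA_0 \ge \tfrac12 C^{-1} p^{1/n}$ for $p$ large. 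So far this only bounds $N_p$ by a multiple of $p$, which is not enough.

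The crux is to promote this to a bound on $N_p$ that is \emph{uniform} in $p$, contradicting $\sup_p N_p = \infty$. The mechanism I would use: given two disjoint closed embedded minimal hypersurfaces, a standard area minimization between them (in a region of the complement, after possibly passing to a double cover if orientations require it) produces, with automatic regularity for $3 \le n \le 7$, a two-sided closed embedded \emph{stable} minimal hypersurface which, by the strong maximum principle against minimal barriers, either lies strictly between the two or coincides with one of them — in either case it is one of $S_1, \dots, S_m$. Hence the finite family $\{S_1, \dots, S_m\}$ ``separates'' every pair of disjoint closed embedded minimal hypersurfaces; equivalently, cutting $M$ along $S_1 \cup \cdots \cup S_m$ yields finitely many compact regions $\Omega_1, \dots, \Omega_s$ with smooth minimal boundary, each of which is Frankel in its interior (a disjoint pair of minimal hypersurfaces inside $\Omega_j$ would produce a new stable one there, not on the list). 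Applied to the pairwise disjoint components of $\Sigma_p$: for each $i$, ``being separated by a copy of $S_i$'' is a triangle-free relation on components (two components lying on the two sides of that copy), so a Ramsey-type argument — together with a bound on how the components distribute among the regions $\Omega_j$ — bounds $N_p$ in terms of $m$ and the topology of $M$ alone.

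I expect the main obstacle to be precisely this last combinatorial-geometric step. A component of $\Sigma_p$ need not avoid the cutting hypersurfaces $S_i$: it may cross some $S_i$ transversally, so it is not literally confined to one region $\Omega_j$, and one must separately bound the number of pairwise disjoint minimal hypersurfaces crossing a given $S_i$. The natural tools are again the maximum principle — now for area-minimizers, which (unlike general minimal hypersurfaces) cannot cross a minimal barrier — a homological count via the dual graph of the decomposition $M = \bigcup_j \overline{\Omega_j}$, and the observation that for each fixed $p$ the components of $\Sigma_p$ all have area $\le \area_g(\Sigma_p)$ and index $\le p$, so that an accumulation of disjoint components would force a Jacobi field and contradict bumpiness. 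The remaining ingredients — the Schoen--Simon estimates, the monotonicity lower bound $\area_g(\Sigma) \ge \delta(M,g) > 0$ on any closed embedded minimal hypersurface, and the lower bound $\area_g(\Sigma_p) \ge C^{-1} p^{1/n}$ from Theorem \ref{theo:exist-p-widths} — are by now standard.
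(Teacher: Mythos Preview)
Your overall strategy --- if (1) fails, use Schoen--Simon estimates and bumpiness to show there are only finitely many closed embedded stable minimal hypersurfaces, then parlay this into a uniform bound on the number of components of $\Sigma_p$ --- is exactly the paper's. The gap is precisely where you flag it. Cutting along the stable $S_1,\dots,S_m$ and asserting that each resulting region $\Omega_j$ is Frankel in its interior is not justified: given two disjoint unstable $\Sigma,\Sigma'\subset\mathrm{int}(\Omega_j)$, your ``minimize between them'' is only well-defined when they cobound a region, which they need not; and if instead you minimize in the class $[\Sigma]\in H_{n-1}(\Omega_j;\ZZ_2)$, the minimizer may simply be a (stable) boundary component of $\Omega_j$, producing no new stable surface and no contradiction. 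The crossing issue (components of $\Sigma_p$ meeting the $S_i$ transversally) is also genuine, and the Ramsey/dual-graph heuristics you list do not close it.

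The paper avoids all of this by reversing the roles of stable and unstable in the cutting step. Given $Q\ge 2$ pairwise disjoint \emph{unstable} closed embedded minimal hypersurfaces, it cuts $M$ along \emph{them}. In each connected piece $\check M$ with $J\ge 2$ boundary components one has $[\Gamma_i]\neq 0$ in $H_{n-1}(\check M;\ZZ_2)$ for every boundary component $\Gamma_i$, so one can minimize area successively in these homology classes; since $\partial\check M$ is unstable, each minimizer lies strictly in the interior and is stable. An elementary count then shows these minimizers contribute at least $J/2$ distinct connected components, so summing over the pieces gives at least $Q/2$ stable surfaces in $M$, whence $Q\le\max\{2N,1\}$. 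This delivers the uniform bound on the number of components of $\Sigma_p$ with no crossing issues, no Frankel hypothesis, and no Ramsey combinatorics.
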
 

We note that for $(M^{3},g)$ with a bumpy metric of positive scalar curvature, the first case never occurs by \cite{CKM,Carlotto:arb-large} and moreover we can conclude that $\ind(\Sigma_{p}')\to\infty$. On the other hand, Colding--Minicozzi have shown that any $3$-manifold admits a (bumpy) metric in which the first alternative occurs \cite{ColdingMinicozzi:no-area-bds}.

We further note that in \cite{ChodoshMantoulidis} we have shown how to use the monotonicity formula find a component $\Sigma_{p}''$ of $\Sigma_{p}$ with $\genus(\Sigma_{p}'')\geq C^{-1}\ind(\Sigma_{p}'')\geq p^{\frac 23}$ (the genus bound follows from the work of Ejiri--Micallef \cite{EjiriMicallef}). The same argument applies in higher dimensions to find $\Sigma_{p}''\subset (M^{n},g)$ with $\ind(\Sigma_{p}'') \geq Cp^{1-\frac 1n}$. It is not clear to us, however, that $\Sigma_{p}'$ and $\Sigma_{p}''$ can be taken to be the same. 

\subsection{Acknowledgments}

O.C. was supported in part by an NSF grant DMS-1811059. He is grateful to Simon Brendle and Davi Maximo for several interesting conversations concerning the behavior of the $\Sigma_{p}$ surfaces. C.M. would like to thank Yevgeny Liokumovich for interesting conversations on Almgren--Pitts theory. Both authors would like to thank Andr\'e Neves for a helpful discussion about this work. This work originated during the authors' visit to the International Centre for Mathematical Sciences (ICMS) in Edinburgh during the ``Geometric Analysis Workshop'' during the summer of 2018, which they would like to acknowledge for its support.

\section{Proof of Theorem \ref{theo:area-unbounded}}

	\begin{lemm}\label{lemm:stab-finitely-many}
	For $3\leq n\leq 7$, consider $(M^{n},g)$ a closed Riemannian $n$-manifold with a bumpy metric. Then for $A_{0}>0$, there are finitely many closed embedded stable minimal hypersurfaces with area at most $A_{0}$. 
	\end{lemm}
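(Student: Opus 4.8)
The plan is to argue by contradiction via a compactness argument. Suppose there were infinitely many distinct closed embedded stable minimal hypersurfaces $\Sigma_1, \Sigma_2, \ldots$ in $(M^n,g)$ with $\area_g(\Sigma_j) \le A_0$ for all $j$. The key point is that for stable minimal hypersurfaces in dimension $3 \le n \le 7$, one has uniform curvature estimates: by the Schoen--Simon--Yau estimates (or Schoen--Simon in the range $n \le 7$), a stable minimal hypersurface with an area bound has a priori bounds on all derivatives of its second fundamental form on the interior, and since these hypersurfaces are closed (no boundary) the estimates hold globally. Combined with the area bound, this gives uniform bounds on $\sup_{\Sigma_j} |A_{\Sigma_j}|$ and all higher derivatives, uniformly in $j$.

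Next I would extract a convergent subsequence. By the curvature estimates and the area bound, a subsequence of the $\Sigma_j$ converges (smoothly, with multiplicity, but after passing to the support) to a closed embedded minimal hypersurface $\Sigma_\infty$; by lower semicontinuity of the index and stability being preserved in the limit, $\Sigma_\infty$ is itself stable. The convergence is smooth and graphical: for $j$ large, $\Sigma_j$ is a normal graph over $\Sigma_\infty$ (or over the relevant sheets of a multiplicity-$m$ cover, if $\Sigma_\infty$ is one-sided one passes to its double cover) of a function $u_j$ with $\|u_j\|_{C^2} \to 0$.

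Now I invoke bumpiness to reach the contradiction. If the multiplicity of convergence is $m \ge 2$, then (in the two-sided case) the normalized differences $u_j / \|u_j\|_{C^2}$ converge to a Jacobi field on $\Sigma_\infty$, which must be non-trivial — this contradicts the bumpy hypothesis directly. If the multiplicity is $1$, then for large $j$ the surface $\Sigma_j$ is a small graph over $\Sigma_\infty$; but a bumpy metric forces minimal hypersurfaces to be isolated in the smooth topology (if $u_j \to 0$ is a nonzero solution of the minimal surface equation over $\Sigma_\infty$, a standard Lyapunov--Schmidt / implicit function theorem argument produces a non-trivial Jacobi field on $\Sigma_\infty$ in the limit of $u_j/\|u_j\|$, again contradicting bumpiness). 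Hence for $j$ large $\Sigma_j = \Sigma_\infty$, contradicting that the $\Sigma_j$ are distinct.

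The main obstacle is handling the one-sided case and the multiplicity bookkeeping cleanly: one must be careful that a one-sided limit $\Sigma_\infty$ is approached by two-sided $\Sigma_j$ through its orientation double cover, and that "bumpy" (which forbids Jacobi fields on \emph{immersed} minimal hypersurfaces, hence on such double covers) still delivers the contradiction. The curvature estimates and the extraction of the limit are standard; the delicate part is the dichotomy argument showing that a bumpy metric makes the moduli space of area-bounded stable minimal hypersurfaces discrete, and then combining discreteness with the compactness to get finiteness.
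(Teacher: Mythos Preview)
Your proposal is correct and follows essentially the same approach as the paper: contradiction via curvature estimates for stable hypersurfaces in the dimension range $3\le n\le 7$, smooth subsequential convergence (possibly with multiplicity, passing to a double cover if the limit is one-sided), and normalization of the graphing functions to produce a nontrivial Jacobi field contradicting bumpiness. The paper does not separate the multiplicity-one and higher-multiplicity cases as you do (it simply picks a graphical component of $\Sigma_j$ over $\Sigma_\infty$ and normalizes), and it also records a second, alternative argument via a strictly mean convex tubular neighborhood and the maximum principle, but your main line is the same.
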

	\begin{proof}
		This follows from curvature estimates \cite{Sch83,SSY,Schoen-Simon:1981} and the proof of \cite{ColdingMinicozzi:no-area-bds} (see also the proof of \cite{Sharp}). Namely, if there is a sequence $\Sigma_{j}$ of stable embedded minimal hypersurfaces with uniformly bounded area, then they have uniformly bounded curvature. Thus, after passing to a subsequence, they converge smoothly (possibly with multiplicity) to a stable minimal hypersuface $\Sigma_{\infty}$. Passing to a double cover if $\Sigma_{\infty}$ is one-sided, we can thus find---for $j$ sufficiently large---graphs $u_{j}:\Sigma_{\infty}\to\RR$ so that the exponential normal graph of $u_{j}$ is some component of $\Sigma_{j}$. We have that $u_{j}\to 0$ in $C^{\infty}(\Sigma_{\infty})$. After normalizing by $\Vert u_{j} \Vert_{C^{2,\alpha}(\Sigma_{\infty})}$, it is standard to pass to a further subsequence to find a non-trivial Jacobi field on (a cover of) $\Sigma_{\infty}$. This contradicts the bumpyness of $(M^{n},g)$. 
		
		Alternatively, the implicit function theorem guarantees that any strictly stable minimal surface has a (strictly) mean convex tubular neighborhood (as before, passing to a double cover if the surface is one-sided). No minimal surface (besides the original one) can be completely contained in this neighborhood by the maximum principle. However, if there were infinitely many embedded stable minimal surfaces with bounded area, then curvature estimates and area bounds would allow one to find a sequence of such surfaces converging (possibly with multiplicity) to a limiting such surface. This cannot occur by considering the aforementioned mean convex neighborhood. 
	\end{proof}
	
	\begin{lemm}\label{lemm:stab-count-unstab-count}
	For $3\leq n \leq 7$, consider $(M^{n},g)$ a connected closed Riemannian $n$-manifold. Assume there are at most $N$ stable minimal surfaces in $(M^{n},g)$. Then $M$ contains at most $\max\{2N,1\}$ disjoint closed embedded unstable minimal hypersurfaces.
	\end{lemm}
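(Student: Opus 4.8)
The plan is to exploit a topological separation argument: each unstable embedded closed minimal hypersurface, together with the stable ones, must ``fence off'' a region of $M$, and since $M$ is connected there cannot be too many such fences. First I would dispose of the trivial case: if there are no unstable minimal hypersurfaces the bound $\max\{2N,1\}$ holds vacuously, so assume there is at least one. Let $\Sigma_1, \dots, \Sigma_k$ be disjoint closed embedded unstable minimal hypersurfaces. The key point is that an unstable minimal hypersurface $\Sigma_i$ is, in particular, \emph{not} area-minimizing to either side: by the first variation / second variation analysis (or directly, by running a one-sided mean-curvature flow or min-max argument from $\Sigma_i$), on each side of $\Sigma_i$ one can produce a stable minimal hypersurface obtained as a limit of area-decreasing deformations (here one uses $3 \leq n \leq 7$ so that the limiting stable minimal hypersurface is smooth and embedded, by the Schoen--Simon--Yau and Schoen--Simon regularity theory \cite{SSY,Schoen-Simon:1981}). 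I would make this precise by appealing to the standard fact that in a manifold without stable minimal hypersurfaces in a given isotopy class one can always minimize area or run mean curvature flow to a (possibly empty, or possibly the boundary) stable limit; the presence of the unstable $\Sigma_i$ as a barrier on one side forces the limit to be a nonempty stable minimal hypersurface \emph{disjoint from the interior of that side}, hence disjoint from $\Sigma_i$.

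The heart of the argument is then a counting/pigeonhole step. Cut $M$ along $\Sigma_1 \cup \dots \cup \Sigma_k$; this produces a manifold-with-boundary $M'$ whose components I will analyze. To each unstable $\Sigma_i$ I associate the (at most two) stable minimal hypersurfaces $S_i^{\pm}$ produced above on its two sides — these lie in $M \setminus (\Sigma_1 \cup \dots \cup \Sigma_k)$ and are among our $\leq N$ stable hypersurfaces. The claim I want is that the assignment $\Sigma_i \mapsto \{S_i^+, S_i^-\}$ cannot have too much overlap: if two distinct $\Sigma_i, \Sigma_j$ produced the same stable hypersurface on a common side, then a maximum-principle / connectedness argument shows $\Sigma_i$ and $\Sigma_j$ bound, together with that stable hypersurface, a region that pins them — and iterating, one sees each stable hypersurface can be ``charged'' by at most two of the $\Sigma_i$'s (from its two sides). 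Formally: consider the bipartite-graph structure with the $\Sigma_i$ on one side and the $\leq N$ stable hypersurfaces on the other, with an edge $(\Sigma_i, S)$ whenever $S = S_i^\pm$. Each $\Sigma_i$ has degree $\geq 1$. Each stable $S$ has a neighborhood with a well-defined ``inside'' and ``outside,'' and an argument via the mean-convex foliation near a strictly stable $S$ (as in the second proof of Lemma \ref{lemm:stab-finitely-many}) shows $S$ receives at most one edge from each side, so $\deg(S) \leq 2$. Counting edges: $k \leq \sum_i \deg(\Sigma_i) = \sum_S \deg(S) \leq 2N$, giving $k \leq 2N$.

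I expect the main obstacle to be making rigorous the statement that an unstable $\Sigma_i$ forces a \emph{disjoint, nonempty} stable minimal hypersurface on each side, and that the degree-$\leq 2$ bound on each stable $S$ is genuinely correct (one must rule out three distinct unstable hypersurfaces all ``facing'' the same stable $S$ nontrivially). The cleanest route for the existence of the side-limits is probably the Meeks--Simon--Yau / mean-curvature-flow argument: push $\Sigma_i$ slightly to one side into a mean-concave barrier configuration and minimize area in the region it cuts off, which must converge to a stable limit that is either $\Sigma_i$ itself (excluded, as $\Sigma_i$ is unstable) or a stable minimal hypersurface strictly inside; compactness in the range $3\le n\le 7$ guarantees smoothness. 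For the degree bound, the point is that a strictly stable $S$ — and by bumpyness all stable minimal hypersurfaces here are strictly stable, or we pass to the relevant double cover — has a one-sided mean-convex neighborhood on each side, and any unstable $\Sigma_i$ with $S_i^\pm = S$ must approach $S$ from a definite side through that foliation; two hypersurfaces approaching $S$ from the same side would have to intersect or coincide by the maximum principle. I would organize the write-up to isolate this foliation fact as the one nontrivial geometric input and treat the rest as bookkeeping.
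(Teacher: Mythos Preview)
Your overall architecture---cut $M$ along the unstable $\Sigma_i$, produce stable hypersurfaces by area minimization, and bound $k$ via a $2$-to-$1$ pigeonhole---matches the paper's. But the two places you flag as obstacles are exactly where the argument, as written, does not go through.

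\textbf{Nonemptiness (degree $\geq 1$).} Your mechanism (``push $\Sigma_i$ to one side and minimize in the region it cuts off; the limit is $\Sigma_i$ or a stable hypersurface strictly inside'') omits a third outcome: the empty set. If the component of $M\setminus\bigcup_j\Sigma_j$ on that side has $\Sigma_i$ as its \emph{only} boundary component, then $[\Sigma_i]=0$ in $H_{n-1}$ of that component and the minimizer is empty. The paper handles this by first checking that, once $Q\geq 2$, every $\Sigma_i$ contributes at least one boundary component to a piece with $J\geq 2$ boundary components; there $[\Gamma_i]\neq 0$ in $H_{n-1}(\check M;\ZZ_2)$, so minimizing in that class yields a nonempty stable hypersurface. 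Your sketch never isolates this homological input.

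\textbf{The degree $\leq 2$ bound.} Your justification (``two $\Sigma_i$ approaching a strictly stable $S$ from the same side through the mean-convex foliation would intersect by the maximum principle'') does not work: the $\Sigma_i$ are fixed, pairwise disjoint by hypothesis, and may lie far from $S$; nothing about the tube around $S$ forces distinct $\Sigma_i$'s whose minimization happens to land on $S$ to collide. If you minimize all the $[\Sigma_i]$ simultaneously in a common component, the resulting regions between $\Sigma_i$ and $S$ can overlap, and there is no a priori reason three or more $\Sigma_i$ cannot all be assigned to $S$. The paper's mechanism is different: it minimizes \emph{iteratively}. Minimize $[\Gamma_1]$ to get $\Gamma_1'$ and a region $\check\Omega_1$ with $\partial\check\Omega_1=\Gamma_1\cup\Gamma_1'$; excise $\check\Omega_1$; minimize $[\Gamma_2]$ in the remainder (allowing the minimizer to coincide with already-produced stable boundary components); and so on. By construction the $\check\Omega_1,\dots,\check\Omega_J$ are pairwise disjoint, so any connected stable component $\Gamma\subset\bigcup_j\Gamma_j'$ lies in $\partial\check\Omega_a$ for at most two indices $a$ (one per side of $\Gamma$). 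That disjointness of the $\check\Omega_j$, not a local foliation argument near $S$, is the source of the factor $2$.

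A minor point: you invoke bumpiness (``all stable minimal hypersurfaces here are strictly stable''), but the lemma does not assume it, and the paper's proof uses only area minimization in $\ZZ_2$-homology together with instability of the boundary---no nondegeneracy is needed.
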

	\begin{proof}[Proof of Lemma \ref{lemm:stab-count-unstab-count}]
		Suppose there are $Q\geq2$ disjoint unstable embedded minimal hypersurfaces $\Sigma_{1},\dots,\Sigma_{Q}$. Consider 
		\[
		\tilde M : = M \setminus \bigcup_{i=1}^{Q}\Sigma_{i}.
		\]
		We can give $\tilde M$ the structure of a compact Riemannian manifold with boundary $\partial \tilde M$ equal to the $\Sigma_{i}$ (or their two-sided double cover if they were one-sided originally). Note that each component of $\partial\tilde M$ is unstable (for the components that were originally two-sided, this is clear; for the one-sided surfaces this follows by lifting an unstable variation to the double cover). Write $\tilde M = \tilde M_{1} \cup \tilde M_{2}$, where $\tilde M_{1}$ is the union of the connected components of $\tilde M$ with exactly one boundary component and $\tilde M_{2}$ is the union of components of $\tilde M$ with at least $2$ boundary components. 
		
		We claim that $\partial\tilde M_{2}$ has at least $Q$ components. Indeed, consider some $\Sigma_{i}$. Suppose first that $\Sigma_{i}$ is two sided and separating. Consider $\Omega_{\pm}$ the components of $\tilde M$ containing $\Sigma_{i}$ in their boundary. Suppose that $\Omega_{-}\cup\Omega_{+}\subset \tilde M_{1}$. In this case, it is clear that $Q=1$ and $\Sigma_{i}$ was the only unstable surface in the family, contradicting our assumption that $Q\geq 2$. Thus, at least one of the $\Omega_{\pm}$ is contained in $\tilde M_{2}$. This determines at least one element of $\partial \tilde M_{2}$. Now, suppose that $\Sigma_{i}$ is two-sided but not separating. It is clear that any component of $\partial \tilde M$ associated to $\Sigma_{i}$ cannot bound a component of $\tilde M_{1}$ (otherwise $\Sigma_{i}$ would be separating). Finally, if $\Sigma_{i}$ was one-sided and some component of $\partial \tilde M$ bounded a component of $\tilde M_{1}$, then as before, we would have $Q=1$, a contradiction.

		Given this, Lemma \ref{lemm:stab-count-unstab-count} is a consequence of the following claim:

		\begin{clai}
			For $3\leq n \leq 7$, consider $(\check M^{n},g)$ a connected compact Riemannian manifold with $J\geq 2$ boundary components that are all unstable minimal hypersurfaces. Then, there are at least $J/2$ closed embedded stable minimal surfaces in the interior of $\check M$. 
		\end{clai}
		
		\begin{proof}[Proof of the claim]
			Write the components of $\partial \check M$ as $\Gamma_{1},\dots,\Gamma_{J}$. Because $J\geq 2$, 
			\[
			[\Gamma_{i}]\not = 0 \in H_{n-1}(\check M;\ZZ_{2}).
			\] 
			Find $\Gamma_{1}' \in [\Gamma_{1}]$ minimizing area in the homology class. Because each component of the boundary is an unstable minimal surface, $\Gamma_{1}'$ is contained in the interior of $\check M$. There is thus an open set $\check \Omega_{1}\subset \check M$ so that $\partial\check\Omega_{1} = \Gamma_{1}\cup\Gamma_{1}'$. Set $\check M_{1} : = \check M \setminus \check\Omega_{1}$. This manifold has $J-1$ unstable boundary components and at least $1$ stable boundary component. For $\Gamma_{2}$ one of the unstable boundary components minimize area in homology to get $\Gamma_{2}'$. Each component of $\Gamma_{2}'$ is contained in the interior of $\check M_{1}$ or coincides with one of the stable components of $\partial \check M_{1}$. In either case, we can repeat this process $J$ times. Consider the disjoint stable minimal surfaces $\Gamma_{1}',\dots,\Gamma_{J}'$ in $\check M$. We also note that, by construction, the $\check\Omega_{1},\dots,\check\Omega_{J}$ are all disjoint.

			We would like to estimate the number of connected components of $\cup_{j=1}^{J}\Gamma_{j}'$ (note that it is not necessarily true that the $\Gamma_{j}'$ are pairwise disjoint, but at any point of intersection of $\Gamma_{j}'$ and $\Gamma_{k}'$, the two surfaces locally agree, by construction). Consider a connected component $\Gamma \subset \cup_{j=1}^{J}\Gamma_{j}'$. There is some $a\in\{1,\dots,J\}$ so that $\Gamma\subset \partial\check\Omega_{a}$. Moreover, there is at most one other index $b \in\{1,\dots,J\}\setminus\{a\}$ so that $\Gamma\subset \partial\check\Omega_{b}$ (this holds because the $\check\Omega_{j}$ are pairwise disjoint). Finally, for any $\Gamma_{j}$, we can find such a component $\Gamma$ (possibly multiple such components). Choosing such a component $\Gamma$ corresponding to each $\Gamma_{j}$, we have just seen that a component can appear at most twice in this process. Thus we have found at least $J/2$ components, proving the claim.
		\end{proof}
		This completes the proof of Lemma \ref{lemm:stab-count-unstab-count}. 
	\end{proof}

\begin{proof}[Proof of Theorem \ref{theo:area-unbounded}]
If the first possibility fails, then there is a uniform area bound for embedded stable minimal hypersurfaces. Thus, Lemma \ref{lemm:stab-finitely-many} implies that there are finitely many stable embedded minimal hypersurfaces in $(M^{n},g)$. Call this number $N$. Applying Lemma \ref{lemm:stab-count-unstab-count}, we find that there are at most $N+\max\{2N,1\}$ pairwise disjoint embedded minimal surfaces in $(M,g)$. Applied to $\Sigma_{p}$, there is at least one component $\Sigma_{p}'$ with $\area(\Sigma_{p}') \geq C p^{\frac 1n}$. 
\end{proof}

\bibliographystyle{alpha}
\bibliography{main}

\end{document}